\documentclass[10pt]{article}
\textwidth= 5.00in
\textheight= 7.4in
\topmargin = 30pt
\evensidemargin=0pt
\oddsidemargin=55pt
\headsep=17pt
\parskip=.5pt
\parindent=12pt
\font\smallit=cmti10

\usepackage{caption}
\usepackage{color}
\usepackage{amssymb,latexsym,amsmath,epsfig,amsthm} 
\usepackage{empheq}
\usepackage{here}
\usepackage{url}
\usepackage{ascmac}
\makeatletter
\usepackage{here}
\usepackage{comment}

\renewcommand\section{\@startsection {section}{1}{\z@}
{-30pt \@plus -1ex \@minus -.2ex}
{2.3ex \@plus.2ex}
{\normalfont\normalsize\bfseries\boldmath}}

\renewcommand\subsection{\@startsection{subsection}{2}{\z@}
{-3.25ex\@plus -1ex \@minus -.2ex}
{1.5ex \@plus .2ex}
{\normalfont\normalsize\bfseries\boldmath}}

\renewcommand{\@seccntformat}[1]{\csname the#1\endcsname. }

\makeatother
\newtheorem{theorem}{Theorem}
\newtheorem{lemma}{Lemma}

\theoremstyle{definition}
\newtheorem{definition}{Definition}

\begin{document}

\begin{center}
\uppercase{\bf Maximum Nim and Josephus Problem algorithm}
\vskip 20pt
{\bf Hikaru Manabe}\\
{\smallit Keimei Gakuin Junior and High School, Kobe City, Japan}\\
{\tt urakihebanam@gmail.com}
\vskip 10pt
{\bf Ryohei Miyadera }\\
{\smallit Keimei Gakuin Junior and High School, Kobe City, Japan}\\
{\tt runnerskg@gmail.com}
\vskip 10pt
{\bf Yuji Sasaki}\\
{\smallit Graduate School of Advanced Science and Engineering Hiroshima University, Higashi-Hiroshima City, Japan}\\
{\tt urakihebanam@gmail.com}
\vskip 10pt
{\bf Shoei Takahashi}\\
{\smallit Keio University Faculty of Environment and Information Studies, Fujisawa City, Japan}\\
{\tt shoei.takahashi@keio.jp}
\vskip 10pt
{\bf Yuki  Tokuni}\\
{\smallit Graduate School of Information Science University of Hyogo, Kobe City, Japan}\\
{\tt ad21o040@gsis.u-hyogo.ac.jp}
{\tt \ }


\end{center}

\centerline{\bf Abstract}
\noindent
Let $k,n$ be positive integers such that $n,k \geq 2$.
We present a new algorithm to determine the last number that remains in the Josephus problem, where numbers $1,2,\dots, n$ are arranged in a circle, and every $k$-th number is removed. 
We discovered a new formula that calculates the last number that remains in the Josephus 
problem, and this formula is based on a Maximum Nim of combinatorial game theory.

When $k \leq n$, the time complexity of our algorithm is $O(k \log n)$, which is on par with the existing $O(k \log n)$ algorithm. We do not have any recursion overhead or stack overflow because we do not use any recursion. Therefore, the space complexity of our algorithm is $O(1)$, and ours is better than the existing $O(k \log n)$ algorithm in this respect.
The algorithm is as follows.\\
$(i)$ We start with $x=k-1$. \\
$(ii)$ We substitute $x$ with $x +  \left\lfloor \frac{x}{k-1} \right\rfloor +1$ until we get $x$ such 
$nk-n \leq x$. \\
$(iii)$ Then, $nk-x$ is the number that remains.

\section{Introduction}
Let $\mathbb{Z}_{\ge 0}$ and $\mathbb{N}$ represent the set of non-negative numbers and the set of positive integers, respectively.
Let $n,k \in \mathbb{N}$.

We have a finite sequence of positive integers $1,2,3$
$, \cdots, n-1, n$ arranged in a circle, and we remove every $k$-th number until only one remains. The Josephus problem is to determine the number that remains.

Many researchers have tried to find an efficient algorithm to determine the number that remains in the Josephus problem.

D. Knuth \cite{knuth} proposed an $O(n \log n)$ algorithm, and E. Lloyd \cite{O(nlogm)} proposed an $O(n log k)$ algorithm for the case that $k < n$.
For a small $k$ and a large $n$, an $O(k \log n)$ algorithm is proposed in \cite{josephusalgorithm}.

We discovered a new formula that calculates the last number that remains in the Josephus 
problem, and this formula is based on the theory of a Maximum Nim of combinatorial game theory. We will present this formula in  \cite{integer2024} soon.
We made an algorithm using this formula. The time complexity of our algorithm for the Josephus problem is $O(k \log n)$. We do not have any recursion overhead or stack overflow because we do not use any recursion. Therefore, the space complexity of our algorithm is $O(1)$, and ours is better than the existing $O(k \log n)$ algorithm in this respect.

In this article, we omitted the mathematical background of the algorithm for the Josephus problem.
If you want to know our algorithm's mathematical background with proofs, please read our article \cite{integer2024}.

\begin{definition}\label{seconddefn}
For $k \in \mathbb{N}$ such that $k \geq 2$,
we define a function $h_{k}(x)$ for a non-negative integer $x$ as
\begin{equation}
h_{k}(x)= x + \left\lfloor \frac{x}{k-1} \right\rfloor +1. \nonumber  
\end{equation}    
\end{definition}

\begin{theorem}\label{coroforp}
Let $n$ be a natural number. Then, there exists $p \in \mathbb{N}$ such that 
$h_{k}^{p-1}(k-1) < n(k-1) \leq h_{k}^{p}(k-1)$, and the last number that remains is $nk-h_{k}^{p}(k-1)$ in the Josephus problem of $n$ numbers, where every $k$-th number is removed.
\end{theorem}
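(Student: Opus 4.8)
The plan is to reduce the theorem to a purely arithmetic fact about the iteration of $h_k$, via the classical Josephus recurrence. Write $W(m)$ for the surviving position (labelling the circle $1,\dots,m$ and starting the count at $1$) when $m$ numbers are arranged in a circle and every $k$-th is removed; then $W(1)=1$ and $W(m)=((W(m-1)+k-1)\bmod m)+1$. The theorem predicts $W(n)=nk-h_k^{p}(k-1)$, so it is natural to track the complementary quantity $f(m):=mk-W(m)$. Substituting the recurrence, writing $W(m-1)=(m-1)k-f(m-1)$, and using $mk\equiv 0\pmod m$, a short computation collapses the added constants and the modular reflection into the clean rule
\begin{equation}
f(1)=k-1,\qquad f(m)=m(k-1)+\bigl(f(m-1)\bmod m\bigr)\quad(m\ge 2).\nonumber
\end{equation}
In particular $f(m)\ge m(k-1)$ for every $m$.

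First I would record the elementary structural facts about the sequence $a_p:=h_k^{p}(k-1)$. Since $h_k(x)=\lfloor kx/(k-1)\rfloor+1>x$, the sequence $(a_p)_{p\ge 0}$ is strictly increasing and unbounded, with $a_0=k-1$; for $n\ge 2$ this immediately yields the existence and uniqueness of $p$ with $a_{p-1}<n(k-1)\le a_p$, settling the first assertion. Let $\Phi(t)$ denote the least term $a_p$ with $a_p\ge t$. The heart of the proof is then the single claim
\begin{equation}
f(m)=\Phi\bigl(m(k-1)\bigr)\qquad\text{for all }m\ge 1,\nonumber
\end{equation}
since applying it at $m=n$ gives $W(n)=nk-f(n)=nk-a_p$ with the required $p$.

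To prove the claim I would induct on $m$, the base case $m=1$ being $f(1)=k-1=a_0=\Phi(k-1)$. Two observations about the window $[m(k-1),mk)$ do most of the work. It contains \emph{at most one} term of $(a_p)$: if $a_j\ge m(k-1)$ then $\lfloor a_j/(k-1)\rfloor\ge m$, so $a_{j+1}=a_j+\lfloor a_j/(k-1)\rfloor+1> a_j+m\ge mk$. It contains \emph{at least one} term, namely $\Phi(m(k-1))$ itself: for $m\ge 2$ its predecessor $a^-$ satisfies $a^-<m(k-1)$, hence $\lfloor a^-/(k-1)\rfloor\le m-1$ and $\Phi(m(k-1))=h_k(a^-)\le a^-+m<mk$. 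Thus $\Phi(m(k-1))$ is the \emph{unique} term of $(a_p)$ in $[m(k-1),mk)$. Since the recurrence forces $f(m)=m(k-1)+(f(m-1)\bmod m)\in[m(k-1),mk)$ as well, it suffices to show $f(m)$ is itself a term of the sequence. Using the inductive hypothesis $f(m-1)=\Phi((m-1)(k-1))$, a direct evaluation of the remainder shows that, as long as $m\ge k-1$, one has $f(m)=f(m-1)$ when $f(m-1)\ge m(k-1)$ and $f(m)=h_k(f(m-1))$ when $f(m-1)<m(k-1)$; either way $f(m)\in\{a_p\}$, and by uniqueness $f(m)=\Phi(m(k-1))$.

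The main obstacle is the complementary range $1\le m\le k-2$. There the clean dichotomy $f(m)\in\{f(m-1),h_k(f(m-1))\}$ can fail: the remainder $f(m-1)\bmod m$ may force $f$ to leap over several consecutive terms $a_p$ in a single step (for example with $k=5$ one computes $f(2)=8=a_2$, skipping $a_1=6$, while $\Phi(8)=a_2$). I would handle this initial segment by a more careful multi-step version of the remainder computation — writing $f(m-1)$ as $(m-1)(k-1)+t$ with $0\le t<m-1$ and verifying that $m(k-1)+(f(m-1)\bmod m)$ still coincides with the unique sequence term in $[m(k-1),mk)$ — or, since these are finitely many bounded values, by a direct check. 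Once the claim $f(m)=\Phi(m(k-1))$ holds for all $m$, specializing to $m=n$ gives $W(n)=nk-h_k^{p}(k-1)$ and completes the proof.
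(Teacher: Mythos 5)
Your route is elementary and genuinely different from the paper's: this paper contains no proof of Theorem~\ref{coroforp} at all (it defers to Corollary 1 of the companion paper \cite{integer2024}, where the formula is extracted from Grundy-value computations for a Maximum Nim game), whereas you argue directly from the classical Josephus recurrence $W(m)=((W(m-1)+k-1)\bmod m)+1$. Your reduction is sound: setting $f(m)=mk-W(m)$ does give $f(1)=k-1$ and $f(m)=m(k-1)+(f(m-1)\bmod m)$; your window lemma (each interval $[m(k-1),mk)$ contains exactly one term of the sequence $a_p=h_k^p(k-1)$) is correctly proven; and your dichotomy $f(m)\in\{f(m-1),h_k(f(m-1))\}$ for $m\ge k-1$ checks out.

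The gap is exactly where you flag it: for $2\le m\le k-2$ nothing is actually proven, and since your induction ascends from $m=1$, an unproven initial segment leaves every later $m$ unproven as well. Your fallback of ``a direct check of finitely many bounded values'' is not a repair: the problematic range has $k-3$ values and $k$ is an unbounded parameter, so no finite verification covers all instances. Fortunately, the multi-step computation you name does close the gap, and in fact it eliminates the case split at $m\ge k-1$ altogether. The key observation is that for $x\in[(m-1)(k-1),\,m(k-1))$ one has $\lfloor x/(k-1)\rfloor=m-1$, hence $h_k(x)=x+m$. Therefore, starting from $f(m-1)$, the sequence terms increase by exactly $m$ as long as they stay below $m(k-1)$; every such term is congruent to $f(m-1)$ modulo $m$, and the first term to reach $m(k-1)$ overshoots it by less than $m$, i.e., lands in $[m(k-1),mk)$ (if $f(m-1)\ge m(k-1)$ already, which can happen when $m>k$, take zero steps). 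But $[m(k-1),mk)$ has length $m$ and $m(k-1)\equiv 0\pmod{m}$, so it contains exactly one integer congruent to $f(m-1)$ modulo $m$, namely $m(k-1)+(f(m-1)\bmod m)=f(m)$. Hence that first term equals $f(m)$, and by your uniqueness lemma it equals $\Phi(m(k-1))$; this proves $f(m)=\Phi(m(k-1))$ for all $m\ge 2$ in one stroke. With that paragraph inserted (and with the first assertion restricted to $n\ge 2$, since for $n=1$ the stated inequalities would force $p=0\notin\mathbb{N}$ --- a defect of the theorem statement rather than of your argument), your proof is complete.
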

For a proof, see Corollary 1 of \cite{integer2024}.
\begin{lemma}\label{lemmaforvw}
For $n \in \mathbb{N}$, let
\begin{equation}
v=\frac{ \log n}{ \log k- \log(k-1)}\label{defofv}
\end{equation}
and
\begin{equation}
w=\frac{ \log(n+1)- \log2}{ \log k- \log(k-1)}.\label{defofw}
\end{equation}
Then, 
\begin{equation}
h_{k}^{\lfloor w \rfloor }(k-1)  \leq   n(k-1) \leq h_{k}^{\lceil v \rceil}(k-1). \label{hfloorhceil}
\end{equation}
\end{lemma}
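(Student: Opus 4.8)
The plan is to replace the floor in $h_{k}$ by a clean two-sided linear estimate and then iterate. Writing $r=\frac{k}{k-1}$, the elementary bounds $\frac{x}{k-1}-1<\left\lfloor\frac{x}{k-1}\right\rfloor\le\frac{x}{k-1}$ give, for every $x\ge 0$,
\begin{equation}
r\,x < h_{k}(x) \le r\,x + 1. \nonumber
\end{equation}
This single inequality is the engine of the whole argument: the lower half will control $h_{k}^{\lceil v\rceil}(k-1)$ and the upper half will control $h_{k}^{\lfloor w\rfloor}(k-1)$.

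First I would iterate the lower bound. Since $r>0$, applying $h_{k}(x)>rx$ repeatedly to $a_{p}:=h_{k}^{p}(k-1)$ yields $a_{p}> r\,a_{p-1}>\dots> r^{p}(k-1)$, so $h_{k}^{p}(k-1)>(k-1)\,r^{p}$ for all $p\ge 1$. Next I would iterate the upper bound, which is the linear recurrence $a_{p}\le r\,a_{p-1}+1$ with $a_{0}=k-1$. Unrolling it and summing the geometric series gives $a_{p}\le r^{p}(k-1)+\sum_{i=0}^{p-1}r^{i}=r^{p}(k-1)+\frac{r^{p}-1}{r-1}$ for all $p\ge 0$. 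The pleasant accident here is that $r-1=\frac{1}{k-1}$, so $\frac{1}{r-1}=k-1$ and the estimate collapses to
\begin{equation}
h_{k}^{p}(k-1)\le (k-1)\bigl(2r^{p}-1\bigr). \nonumber
\end{equation}

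It remains to feed in the definitions of $v$ and $w$. Because $\log k-\log(k-1)=\log r$, the definitions \eqref{defofv} and \eqref{defofw} are exactly $r^{v}=n$ and $r^{w}=\frac{n+1}{2}$. For the right-hand inequality take $p=\lceil v\rceil\ge v$, so $r^{p}\ge r^{v}=n$, and the lower bound gives $h_{k}^{\lceil v\rceil}(k-1)>(k-1)r^{\lceil v\rceil}\ge (k-1)n$, which is even stronger than the required $n(k-1)\le h_{k}^{\lceil v\rceil}(k-1)$. For the left-hand inequality take $p=\lfloor w\rfloor\le w$, so $r^{p}\le r^{w}=\frac{n+1}{2}$, and the upper bound gives $h_{k}^{\lfloor w\rfloor}(k-1)\le (k-1)\bigl(2\cdot\tfrac{n+1}{2}-1\bigr)=(k-1)n$, as desired. (The iterates are well defined since $n\ge 2$ forces $v,w\ge 0$.)

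I expect the only real obstacle to be the bookkeeping in the second step: one must track how the additive $+1$ errors accumulate under iteration and verify that the geometric sum, together with the identity $\frac{1}{r-1}=k-1$, produces precisely the factor $2$ in front of $r^{p}$. That factor is exactly what matches the $-\log 2$ appearing in $w$, so the whole lemma hinges on this constant coming out sharply rather than as a larger implicit constant; everything else is direct substitution of $r^{v}=n$ and $r^{w}=\frac{n+1}{2}$.
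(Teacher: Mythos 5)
Your proposal is correct. The two-sided bound $rx < h_{k}(x) \le rx+1$ with $r=\tfrac{k}{k-1}$ follows at once from $\tfrac{x}{k-1}-1<\lfloor\tfrac{x}{k-1}\rfloor\le\tfrac{x}{k-1}$; iterating the lower half gives $h_{k}^{p}(k-1)>(k-1)r^{p}$ for $p\ge 1$, and unrolling the recurrence $a_{p}\le ra_{p-1}+1$ with $\tfrac{1}{r-1}=k-1$ does collapse to $h_{k}^{p}(k-1)\le(k-1)(2r^{p}-1)$, which is exactly what the $-\log 2$ in $w$ is tuned to. Substituting $r^{v}=n$ and $r^{w}=\tfrac{n+1}{2}$ then yields both halves of \eqref{hfloorhceil}. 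Note that the paper itself does not prove this lemma; it defers to Lemma 8 of the companion paper \cite{integer2024}, so there is no in-paper argument to compare yours against — your proof makes the statement self-contained, which is a genuine addition. Two small remarks. First, an equivalent and slightly slicker route to your upper bound: the shifted sequence $b_{p}=h_{k}^{p}(k-1)+(k-1)$ satisfies $b_{p}\le rb_{p-1}$ (since $ra_{p-1}+1+(k-1)=r\bigl(a_{p-1}+(k-1)\bigr)$), giving $b_{p}\le 2(k-1)r^{p}$ directly without the geometric series. Second, your closing parenthesis is slightly off: the lemma allows $n=1$ (the paper's $\mathbb{N}$ is the positive integers), and in that case $v=w=0$, $\lceil v\rceil=\lfloor w\rfloor=0$, and both sides of \eqref{hfloorhceil} equal $k-1$; since your strict lower bound $h_{k}^{p}(k-1)>(k-1)r^{p}$ holds only for $p\ge 1$, you should observe that for $p=0$ it degenerates to equality, which still suffices because the lemma asserts only non-strict inequalities.
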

For a proof, see Lemma 8 of \cite{integer2024}.

Based on Theorem \ref{coroforp},
we build the following algorithm for the Josephus problem. \\
Algorithm 1\\
$(i)$ We start with $x=k-1$. \\
$(ii)$ We substitute $x$ with $x +  \left\lfloor \frac{x}{k-1} \right\rfloor +1$ until we get $x$ such 
$nk-n \leq x$.\\
$(iii)$ Then, we get $m=nk-x$, and $m$ is the number that remains.

The following Theorem holds.
\begin{theorem}
For integers $n>0$ and $k>0$, The time complexity of Algorithm 1 is $O(k \log n)$.
\end{theorem}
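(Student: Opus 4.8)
The plan is to count how many times the loop in step $(ii)$ executes and to multiply that count by the cost of a single iteration. Write $a_0 = k-1$ and $a_{j+1} = h_k(a_j)$, so that $a_j = h_k^{j}(k-1)$. The loop terminates at the first index $j$ for which $a_j \geq n(k-1) = nk-n$. Since $h_k(x) = x + \lfloor x/(k-1) \rfloor + 1 > x$ for every $x \geq 0$, the sequence $(a_j)$ is strictly increasing, so this first index is well defined; by Theorem \ref{coroforp} it is exactly the integer $p$ with $a_{p-1} < n(k-1) \leq a_{p}$. Hence the loop performs exactly $p$ iterations, and the whole task reduces to bounding $p$.

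First I would bound $p$ from above. Combining $a_{p-1} < n(k-1)$ from Theorem \ref{coroforp} with the right-hand inequality $n(k-1) \leq a_{\lceil v \rceil}$ of Lemma \ref{lemmaforvw} gives $a_{p-1} < a_{\lceil v \rceil}$, and since $(a_j)$ is strictly increasing this forces $p-1 < \lceil v \rceil$, i.e. $p \leq \lceil v \rceil$. (Only the upper half of \eqref{hfloorhceil} is needed here.) It therefore suffices to show $\lceil v \rceil = O(k \log n)$, where $v = \dfrac{\log n}{\log k - \log(k-1)}$. Note that $v$ is independent of the logarithm base, so I may evaluate it with natural logarithms.

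The key step is a lower bound on the denominator $\log k - \log(k-1) = \log\!\bigl(1 + \tfrac{1}{k-1}\bigr)$. Using the elementary inequality $\ln(1+t) \geq \dfrac{t}{1+t}$, valid for $t \geq 0$, with $t = \tfrac{1}{k-1}$ yields $\log k - \log(k-1) \geq \tfrac{1}{k}$. Consequently $v \leq k \log n$ and $\lceil v \rceil \leq k \log n + 1 = O(k \log n)$, so the loop runs $O(k \log n)$ times. Finally, each iteration performs a fixed number of primitive operations — one integer division by $k-1$, one floor, and two additions — on integers bounded by $nk$, each costing $O(1)$ in the unit-cost RAM model, and step $(iii)$ is a single subtraction; multiplying the $O(k\log n)$ iteration count by this constant per-iteration cost gives the claimed bound.

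The hard part will be the estimate $\lceil v \rceil = O(k \log n)$: everything hinges on showing $1/(\log k - \log(k-1)) = O(k)$, which is precisely where the factor of $k$ in the complexity genuinely originates, and this comes from the logarithmic inequality above rather than from any property of the algorithm itself. A secondary point worth stating explicitly is the computational model, since under a bit-complexity accounting each iteration would incur an additional $O(\log(nk))$ factor; the stated $O(k\log n)$ bound is understood in the unit-cost RAM model in which arithmetic on the word-sized integers involved is $O(1)$.
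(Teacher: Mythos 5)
Your proposal is correct and follows essentially the same route as the paper: both bound the iteration count by $\lceil v \rceil$ using the upper half of Lemma \ref{lemmaforvw}, then establish $\lceil v \rceil \leq k \log n + 1$ via the estimate $\log k - \log(k-1) \geq 1/k$, and conclude by charging $O(1)$ per iteration. You actually supply two details the paper leaves implicit --- the proof of the key inequality via $\ln(1+t) \geq t/(1+t)$, and the monotonicity argument identifying the loop count with $p$ --- so your write-up is, if anything, more complete.
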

\begin{proof}
It is sufficient to show that there exists $q \in \mathbb{N}$ such that 
\begin{equation}
n(k-1)\leq h^{q}_k(k-1)    
\end{equation}
and the order of $q$ is $O(k \log n)$.

Since
\begin{equation}
\lceil v \rceil=\frac{ \log n}{ \log k- \log(k-1)}+1 \leq k \log n +1 \label{defofv2}
\end{equation}
and
\begin{equation}
\lfloor w \rfloor=\frac{ \log(n+1)- \log2}{ \log k- \log(k-1)}-1 \geq (k-1)(\log(n+1)- \log2)-1.\label{defofw2}
\end{equation}
By (\ref{defofv2}) and (\ref{defofw2}), 
 Step $(ii)$ in Algorithm 1 is repeated in $O(k \log n)$ times. Step $(i)$ and $(ii)$ can be calculated in constant time. Therefore, the time complexity of Algorithm 1 is $O(k \log n)$.
\end{proof}

The time complexity of our algorithm is $O(k \log n)$, and this time complexity is on a par with a $O(k \log n)$ algorithm in \cite{josephusalgorithm}. We do not have any recursion overhead or stack overflow because we do not use any recursion. Therefore, the space complexity of our algorithm is $O(1)$, and ours is better than the existing $O(k \log n)$ algorithm in this respect.

The structure of our algorithm is more simple than 
the $O(k \log n)$ algorithm in \cite{josephusalgorithm}.

The following is the Python implementation of our algorithm. We have $n$ numbers 
$0,1,2,\dots, n-1$ and remove every $k$th number. We get the number that remains after the calculation.

\begin{verbatim}
def proposed_algorithm(n, k):
  x = k-1   # (1)
  range_min = (k - 1) * n 
  while x < range_min:   # (2)
    x += x // (k - 1) + 1   # (3)
  m = n * k - x   # (4)
  return  m - 1   # (5)
\end{verbatim}

\section{Comparison between Our Algorithm and Existing Algorithms}\label{sectioforalgo}
Here, we compare our algorithm to various existing algorithms. 

We present two existing algorithms and compare our algorithm to these.
\subsection{An Existing $O(n)$ Algorithm}\label{O(n)}
We have $n$ numbers 
$0,1,2,\dots, n-1$ and remove every $k$th number. We get the number that remains after the calculation.
In the following Python implementation, the algorithm is the same as the definition of the Josephus problem itself.
Here, we do not use recursion to avoid stack overflow.
\begin{verbatim}
def benchmark_b(n, k):
    if n == 1:
        return 0
    result = 0
    for i in range(2, n + 1):
        result = (result + k) % i
    return result
\end{verbatim}

$result = (result + k) \% i $
Since this algorithm does not use recursion, the space complexity is $O(1)$.Since time complexity is $O(n)$, the running time will increase rapidly as $n$ increases.

\subsection{An Existing $O(k \log n)$ Algorithm }\label{O(klogn)}
The following algorithm is based on the method of removing $k$-th, $2k$-th, ..., $\left\lfloor \frac{n}{k} \right\rfloor k$-
the number as one step. This algorithm used $cnt = n // k$ to remove these numbers as one step.

\begin{verbatim}
	def benchmark_b(n, k):
  if n== 1:
    return 0
  if k == 1:
    return n - 1
  if n < k:
    return (benchmark_b(n - 1, k) + k) % n
  
  cnt = n // k
  result = benchmark_b(n - cnt, k)
  result -= n % k
  if result < 0:
    result += n
  else:
    result += result // (k - 1)

  return result
\end{verbatim}
The time complexity of this algorithm is $O(k \log n)$. However, the space complexity is $O(n)$, and we may have 
the stack overflow because of recursion.
\subsection{Comparison of running time of each algorithm}
We compare our algorithm to two existing algorithms by program execution time.
We use a MacBook Pro(CPU: M1 Max, RAM:32GB) and run each program $3000$ times.

In the following statement, we denote the execution time of the algorithm in Subsection \ref{O(n)} by Benchmark A,
and the execution time of the algorithm in Subsection \ref{O(klogn)} by Benchmark B.

When $n$ is small, Benchmark A,B are small, but Benchmark A increases rapidly because of its time complexity $O(n)$ as $n$ increases.
When $n$ is large,  the execution time of our algorithm and Benchmark B are smaller than Benchmark A. However, if we compare the execution time of our algorithm to Benchmark B, ours is better.
The reason is that ours has a smaller space complexity, although both have the same time complexity $O(k\log n)$.
Therefore, when $n$ is large, our algorithm is better than these two algorithm.

Benchmark A is constant regardless of the size of $k$. The execution time of our algorithm and Benchmark B
increase as $k$ increases. However, Benchmark B does not increase endlessly because it does not depend on $k$ when $n<k$.

The efficiency of our algorithm, the algorithm in Subsection \ref{O(n)}, and the algorithm in Subsection \ref{O(klogn)} depend on $k$ and $n$.

When $k$ is small and $n$ is large, our algorithm and the algorithm in Subsection \ref{O(klogn)} are better than the algorithm in Subsection \ref{O(n)}. We can explain this fact by the time complexity $O(k \log n)$, but our algorithm is better than the algorithm in Subsection \ref{O(klogn)}.
When $k$ is large and $n$ is not large, the algorithm in Subsection \ref{O(n)} is better than others.





\begin{thebibliography}{1}\footnotesize
\bibitem{lesson} M. H. Albert, R. J. Nowakowski, and D. Wolfe, {\it Lessons In Play: An Introduction to Combinatorial Game Theory, Second Edition}, A K Peters/CRC Press, Natick, MA., United States, 2019.
\bibitem{computerscience} R. L. Graham, D. E. Knuth, and O. Patashnik, {\it Concrete Mathematics: A Foundation for Computer Science},  Addison Wesley, 1989.
\bibitem{levinenim} L. Levine, Fractal sequences and restricted nim, {\it Ars Combin.}  {\bf 80} (2006), 113--127.
\bibitem{thaij2023b} R. Miyadera, S. Kannan, and H. Manabe, Maximum nim and chocolate bar games, {\it Thai J. Math.},  {\bf 21} (4) (2023), 733--749.
\bibitem{integer2023}  R. Miyadera and H. Manabe, Restricted Nim with a Pass, {\it Integers }  {\bf 23} (2023), $\#$G3.
\bibitem{integer2024} S.Takahashi, R. Miyadera and H. Manabe, Maximum Nim and the Josephus Problem, {\it Integers }  {\bf 24} (2024), $\#$G3 to appear.
\bibitem{knuth}  Donald E. Knuth. The Art of Computer Programming. Volume III. Searching
and Sorting. The Art of Computer Programming. Addison-Wesley, 1973.
\bibitem{O(nlogm)} Errol L Lloyd. An O(n log m) algorithm for the Josephus Problem. Journal of Algorithms, Vol. 4, No. 3, pp. 262–270, 1983.
 \bibitem{josephusalgorithm} Algorithms for Competitive Programming
\url{https://cp-algorithms.com/others/josephus_problem.html}
\end{thebibliography}
\end{document}